\documentclass[preprint]{elsarticle}

\usepackage{nccmath}
\usepackage{amsmath}
\usepackage{amsthm}
\usepackage{amssymb}
\usepackage{pdfpages}
\usepackage{afterpage}
\usepackage{bm}
\usepackage{mathtools}
\usepackage{tikz}
\usepackage{hyperref}

\theoremstyle{plain}
\newtheorem{thm}{Theorem}[section]

\newtheorem{prop}{Proposition}[section]

\theoremstyle{definition}
\newtheorem{rem}{Remark}[section]

\newcommand{\vu}{\bm{u}}
\newcommand{\vv}{\bm{v}}

\newcommand{\BR}{\mathbb{R}}
\newcommand{\BN}{\mathbb{N}}

\newcommand{\BC}{\mathbb{C}}

\newcommand{\CD}{\mathcal{D}}

\newcommand{\Tr}{\mathrm{tr}}
\newcommand{\RE}{\mathrm{Re}}

\newcommand{\Diag}{\mathrm{diag}}

\newcommand{\la}{\langle}
\newcommand{\ra}{\rangle}
\newcommand{\MBC}[1]{M_{#1}(\BC)}

\begin{document}
\begin{frontmatter}
\title{A note on some upper bounds for the Frobenius norm of the $q$-deformed commutator}

\author{Motoyuki NOBORI}
\address{Graduate School of Science and Engineering, Ehime University, 2-5 Bunkyo-cho, Matsuyama 790-8577, Japan}
\ead{m819002c@mails.cc.ehime-u.ac.jp}

\begin{abstract}
For two square complex matrices $A,B,$ and a real number $q$, the $q$-deformed commutator of $A$ and $B$ is defined as $AB-qBA$. We give some upper bounds on the Frobenius norm of the $q$-deformed commutator in the case $q\ge0$. All of the inequalities derived in this paper generalize the B\"{o}ttcher-Wenzel inequality.
\end{abstract}

\begin{keyword}
B\"{o}ttcher-Wenzel inequality, $q$-deformed commutator, Ky Fan (2,2)-norm (15A45, 15A60)
\end{keyword}

\end{frontmatter}

\section{Introduction}\label{sec:intro}
 We write $M_{m,n}(\BC)$ for the vector space of all $m\times n$ complex matrices with the inner product $\la A,B\ra=\Tr(AB^{*})$, where $B^{*}=\overline{B^T}$ is the conjugate transpose of $B$ and $\Tr$ denotes the trace. The B\"{o}ttcher-Wenzel inequality gives the following upper bound on the Frobenius norm of the commutator of two square matrices $A$ and $B$:
\begin{align}\label{ineq:BWineq}
\|AB-BA\|_{F}^{2}\le 2\|A\|_{F}^{2}\|B\|_{F}^{2},
\end{align}
where $\|A\|_{F}=\sqrt{\la A,A\ra}$ is the Frobenius norm. Inequality (\ref{ineq:BWineq}) was conjectured to hold for all square real matrices $A$ and $B$ in 2005 \cite{HowBig}. After some proofs in the general case (cf. \cite{ProOf} for the real case and \cite{VarBou, TheFro} for the complex case), several subsequent problems have been considered (cf. \cite{Survey, Survey2}). For example, as an inequality whose upper bound is tighter than that of (\ref{ineq:BWineq}),
\begin{align}\label{ineq:GBWineq1}
\|AB-BA\|_{F}^{2}\le 2\|A\|_{(2),2}^{2}\|B\|_{F}^{2}    
\end{align}
holds \cite{VarBou}, where $\|A\|_{(2),2}=\sqrt{\sigma_{1}^2(A)+\sigma_{2}^2(A)}$ is the Ky Fan (2,2)-norm, and $\sigma_{1}(A)\ge \dots \ge\sigma_{n}(A)$ are its singular values. A characterization of its equality case is given in \cite{GBWineqCharact}. Some generalizations of (\ref{ineq:BWineq}) by changing commutators and norms have also been investigated (cf. \cite{Gen3mat, GenWeimat, qdefo1}). Especially in \cite{qdefo1}, sharp upper bounds on the Frobenius norm of the $q$-deformed commutator defined as $AB-qBA$ for a real number $q$ and two square complex matrices $A,B$ are analyzed: an inequality for $\|AB-qBA\|_{F}$ is called sharp at $q'$ if there exist non-zero matrices $A$ and $B$ satisfying its equality at the point $q=q'$. The following results are known \cite{qdefo1}:
\begin{enumerate}
    \item[\textup{(i)}] If $q\le 0$, then the sharp bound is given by
    \begin{align*}
        \|AB-qBA\|_{F}^{2} \le (1-q)^2\|A\|_{F}^{2}\|B\|_{F}^2.
    \end{align*}
    \item[\textup{(ii)}] If $q > 0$ and $A$ or $B$ is normal, then the sharp bound is given by 
    \begin{align}\label{ineq:qdefo}
        \|AB-qBA\|_{F}^{2} \le (1+q^2)\|A\|_{F}^{2}\|B\|_{F}^2.
    \end{align}
    Also, if $A$ is normal, then 
    \begin{align*}
        \|AB-qBA\|_{F}^{2} \le (1+q^2)\|A\|_{(2),2}^{2}\|B\|_{F}^2
    \end{align*}
    holds for all complex square matrices $B$.
\end{enumerate}
It is also reported in \cite{qdefo1} that inequality (\ref{ineq:qdefo}) fails for some $q \ge 0$ and non-normal matrices $A,B$. In this paper, we derive some upper bounds on the Frobenius norm of the $q$-deformed commutator in the case $q\ge 0$. 
\begin{thm}\label{thm:main2}
Let $n$ be a positive integer, $A,B$ be $n\times n$ complex matrices, and $q\ge0$. 
\begin{enumerate}
    \item[\textup{(i)}] If $q \in [0,1]$, then 
    \begin{align} \label{ineq:sqdefog1}
    \|AB-qBA\|_{F}^{2} &\le \{(1+q)\|A\|_{(2),2}^{2}-2q(1-q)\sigma_{n}^2(A)\}\|B\|_{F}^{2}.
    \end{align}
    \item[\textup{(ii)}] If $q \in [1,+\infty)$, then 
    \begin{align}\label{ineq:sqdefog2}
    \|AB-qBA\|_{F}^{2} &\le \{(q^2+q)\|A\|_{(2),2}^{2}-2(q-1)\sigma_{n}^2(A)\}\|B\|_{F}^{2}.
    \end{align}
\end{enumerate}
\end{thm}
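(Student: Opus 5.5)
The plan is to reduce everything to the generalized B\"{o}ttcher--Wenzel inequality (\ref{ineq:GBWineq1}), together with the elementary bounds
\begin{align*}
\sigma_{n}^{2}(A)\|B\|_{F}^{2}\le \|AB\|_{F}^{2},\ \|BA\|_{F}^{2}\le \sigma_{1}^{2}(A)\|B\|_{F}^{2}.
\end{align*}
To do this, I would write $\|AB-qBA\|_{F}^{2}$ as a combination of $\|AB-BA\|_{F}^{2}$, $\|AB\|_{F}^{2}$ and $\|BA\|_{F}^{2}$ in which the cross term $\RE\,\Tr(AB(BA)^{*})$ cancels. The two identities I would use follow by expanding both sides:
\begin{align*}
\|AB-qBA\|_{F}^{2}&=q\|AB-BA\|_{F}^{2}+(1-q)\|AB\|_{F}^{2}-q(1-q)\|BA\|_{F}^{2},\\
\|AB-qBA\|_{F}^{2}&=q\|AB-BA\|_{F}^{2}-(q-1)\|AB\|_{F}^{2}+q(q-1)\|BA\|_{F}^{2}.
\end{align*}
Both identities hold for every real $q$. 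I would use the first for $q\in[0,1]$ and the second for $q\ge1$, so that every coefficient has a known sign.

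For (i), let $q\in[0,1]$. Apply (\ref{ineq:GBWineq1}) to the first term and the upper bound $\|AB\|_{F}^{2}\le\sigma_{1}^{2}(A)\|B\|_{F}^{2}$ to the second. For the third term, which enters with a minus sign, use the lower bound $\|BA\|_{F}^{2}\ge\sigma_{n}^{2}(A)\|B\|_{F}^{2}$. This gives
\begin{align*}
\|AB-qBA\|_{F}^{2}\le\{(1+q)\sigma_{1}^{2}+2q\sigma_{2}^{2}-q(1-q)\sigma_{n}^{2}\}\|B\|_{F}^{2},
\end{align*}
where the $\sigma_{i}$ are singular values of $A$. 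The right-hand side of (\ref{ineq:sqdefog1}) exceeds this bound by $(1-q)(\sigma_{2}^{2}-q\sigma_{n}^{2})\|B\|_{F}^{2}$. This difference is nonnegative when $n\ge2$, because then $\sigma_{2}\ge\sigma_{n}$ and $q\le1$.

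For (ii), let $q\ge1$. Apply (\ref{ineq:GBWineq1}) to the first term, the lower bound $\|AB\|_{F}^{2}\ge\sigma_{n}^{2}(A)\|B\|_{F}^{2}$ to the negative term, and the upper bound $\|BA\|_{F}^{2}\le\sigma_{1}^{2}(A)\|B\|_{F}^{2}$ to the last term. This gives
\begin{align*}
\|AB-qBA\|_{F}^{2}\le\{(q^{2}+q)\sigma_{1}^{2}+2q\sigma_{2}^{2}-(q-1)\sigma_{n}^{2}\}\|B\|_{F}^{2}.
\end{align*}
The right-hand side of (\ref{ineq:sqdefog2}) exceeds this bound by $(q-1)(q\sigma_{2}^{2}-\sigma_{n}^{2})\|B\|_{F}^{2}$. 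This is again nonnegative for $n\ge2$, since $q\ge1$ and $\sigma_{2}\ge\sigma_{n}$.

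The only delicate point is the degenerate case $n=1$. There $\sigma_{2}$ is absent (it is read as $0$) while $\sigma_{n}=\sigma_{1}$, so the comparison above can fail and this case must be checked directly. With $A=a$ and $B=b$, the left-hand side equals $(1-q)^{2}|a|^{2}|b|^{2}$. The bound in (i) becomes $(1-q+2q^{2})|a|^{2}|b|^{2}$, which dominates since $(1-q+2q^{2})-(1-q)^{2}=q+q^{2}\ge0$. The bound in (ii) becomes $(q^{2}-q+2)|a|^{2}|b|^{2}$, which dominates since $(q^{2}-q+2)-(q-1)^{2}=q+1>0$.

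Apart from this case, the main step is finding the decompositions above that eliminate the cross term. Once they are in hand, the proof is just a matter of applying each bound with the correct sign.
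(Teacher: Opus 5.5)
Your proof is correct, and it takes a genuinely different route from the paper.

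\textbf{Your route.} The identity $\|AB-qBA\|_{F}^{2}=q\|AB-BA\|_{F}^{2}+(1-q)\|AB\|_{F}^{2}-q(1-q)\|BA\|_{F}^{2}$ eliminates the cross term. After that you only need the known inequality (\ref{ineq:GBWineq1}) and the elementary bounds $\sigma_{n}^{2}(A)\|B\|_{F}^{2}\le\|AB\|_{F}^{2},\|BA\|_{F}^{2}\le\sigma_{1}^{2}(A)\|B\|_{F}^{2}$. The sign bookkeeping is right, and so are the gaps $(1-q)(\sigma_{2}^{2}-q\sigma_{n}^{2})\|B\|_{F}^{2}$ and $(q-1)(q\sigma_{2}^{2}-\sigma_{n}^{2})\|B\|_{F}^{2}$. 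The separate scalar check for $n=1$ is also correct and genuinely necessary.

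\textbf{The paper's route.} The paper adds $\|A^{*}B+qBA^{*}\|_{F}^{2}$ to $\|AB-qBA\|_{F}^{2}$ and applies Cauchy--Schwarz against $B$. It splits the result into $k_{1}+k_{2}$. It bounds $k_{1}$ with the density-matrix variance inequality $\Tr(J(AA^{*}+A^{*}A))-|\Tr(JA)|^{2}\le\|A\|_{(2),2}^{2}$, proved via Kakutani's minimax theorem and Ky Fan's maximum principle, and bounds $k_{2}$ with Weyl's inequality. The case $q\ge1$ then comes from the $A\leftrightarrow B$ version with $1/q$ in place of $q$.

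\textbf{What each buys.} The paper's argument does not depend on (\ref{ineq:GBWineq1}); at $q=1$ it re-derives that inequality. Its $k_{1}+k_{2}$ machinery is also reused for Theorem \ref{thm:main1} and for the equality analysis. Your argument relies on (\ref{ineq:GBWineq1}) as a black box. In exchange it is much shorter and treats $q\ge1$ directly. It also bypasses the variance bound and the minimax reduction from density matrices to unit vectors, which is the most delicate step of the paper's argument.

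Your argument also proves more than the theorem. For $n\ge2$, your intermediate bounds are
\[
\{(1+q)\sigma_{1}^{2}+2q\sigma_{2}^{2}-q(1-q)\sigma_{n}^{2}\}\|B\|_{F}^{2}
\quad\text{and}\quad
\{(q^{2}+q)\sigma_{1}^{2}+2q\sigma_{2}^{2}-(q-1)\sigma_{n}^{2}\}\|B\|_{F}^{2}.
\]
These never exceed the right-hand sides of (\ref{ineq:sqdefog1}) and (\ref{ineq:sqdefog2}), and improve them by exactly your gaps. For instance, at $q=0$ you get the optimal bound $\sigma_{1}^{2}(A)\|B\|_{F}^{2}$.

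Your gap formula also gives the ``equality only at $q\in\{0,1\}$'' part of Proposition \ref{prop:charact2} almost immediately. For $q\in(0,1)$, equality in (\ref{ineq:sqdefog1}) would force $\sigma_{2}^{2}=q\sigma_{n}^{2}$, hence $\sigma_{2}(A)=\sigma_{n}(A)=0$ and $BA=O_{n}$. Then $\|AB-BA\|_{F}^{2}=\|AB\|_{F}^{2}\le\sigma_{1}^{2}(A)\|B\|_{F}^{2}<2\sigma_{1}^{2}(A)\|B\|_{F}^{2}$, which contradicts the required equality in (\ref{ineq:GBWineq1}).
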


Inequalities (\ref{ineq:sqdefog1}) and (\ref{ineq:sqdefog2}) are generalizations of (\ref{ineq:GBWineq1}), for both sides of these inequalities coincide with those of (\ref{ineq:GBWineq1}) if $q=1$. The above inequalities and their equality conditions will be derived in section \ref{sec:main}. We also give two other upper bounds on $\|AB-qBA\|_{F}$ that are generalizations of (\ref{ineq:GBWineq1}), and their equality conditions. Then we see that the upper bounds given in sections \ref{sec:intro} and \ref{sec:main} are sharp at some limited points $p$. In section \ref{sec:additional}, we explore the existence of upper bounds on $\|AB-qBA\|_{F}$ tighter than those in Theorem \ref{thm:main2}. \par
Here, we introduce the additional notation used in this paper. We abbreviate $M_{n,n}(\BC)$ and $M_{n,1}(\BC)$ to $M_{n}(\BC)$ and $\BC^{n}$, respectively. We denote by $I_{n}$ the identity matrix of order $n$, and we use $O_{n}$ to denote the zero matrix of order $n$. We write $\BR, \BN$ as the set of real and natural numbers, respectively. If $A, B \in M_{n}(\BC)$ are Hermitian, then $B\ge A$ means that $B-A$ is positive semidefinite.

\section{Main results}\label{sec:main}
We first prove Theorem \ref{thm:main2}. Let $n$ be a positive integer. We call a matrix $J \in \MBC{n}$ a density matrix if $J$ satisfies $\Tr(J)=1$ and $J\ge O_{n}$. In the proof of Theorem \ref{thm:main2}, proving that
\begin{align*}
    \Tr(J(AA^*+A^*A))-|\Tr(JA)|^2\le \|A\|_{(2),2}^2
\end{align*}
for all matrices $A\in M_{n}(\BC)$ and density matrices $J\in M_{n}(\BC)$ is the most important step. And its argument is guided by the proof of the following statement:  
\begin{align}\label{ineq:VarbouImportant}
    \Tr(J(AA^*+A^*A))-2|\Tr(JA)|^2\le \|A\|_{(2),2}^2
\end{align}
for all matrices $A\in M_{n}(\BC)$ and density matrices $J\in M_{n}(\BC)$ (\cite{VarBou}, Theorems 9 and 13). 

\begin{proof}[Proof of Theorem \textup{\ref{thm:main2}}] Let $n$ be a positive integer and $A,B\in M_{n}(\BC)$. If $A$ or $B$ is the zero matrix, then $AB-qBA=O_{n}$ and clearly the inequalities in the theorem hold. Hence, we consider the case where $A,B\neq O_{n}$. Let $q\in [0,1].$ Taking the sum of 
\begin{align*}
    \|AB-qBA\|_{F}^2 =\Tr(A^*ABB^*+q^2AA^*B^*B-qABA^*B^*-qBAB^*A^*)
\end{align*}
and
\begin{align*}
    \|A^*B+qBA^*\|_{F}^2 =\Tr(AA^*BB^*+q^2A^*AB^*B+qABA^*B^*+qBAB^*A^*)
\end{align*}
gives 
\begin{align} \label{eq:comstep1}
   \|AB-qBA\|_{F}^2+ \|A^*B+qBA^*\|_{F}^2 =\Tr((B^*B+q^2BB^*)(AA^*+A^*A)).
\end{align}
From the Cauchy-Schwarz inequality, we have 
\begin{align} \notag
   |\Tr((BB^*+qB^*B)A^*)|&=|\la A^*B+qBA^*, B\ra|\\ \notag
   &\le \|A^*B+qBA^*\|_{F}\|B\|_{F}.
\end{align}
It follows from the above inequality and (\ref{eq:comstep1}) that
\begin{align}\notag
\|AB-qBA\|_{F}^{2}& =\Tr((B^*B+q^2BB^*)(AA^*+A^*A)) - \|A^*B+qBA^*\|_{F}^2 \\ \begin{split}\label{ineq:CS}              
                  &\le \Tr((B^*B+q^2BB^*)(AA^*+A^*A)) \\&\quad \quad-\frac{|\Tr((BB^*+qB^*B)A^*)|^2}{\|B\|_{F}^2}\end{split}\\ \notag
                  &= (\Tr((B^*B+qBB^*)(AA^*+A^*A)) -\frac{|\Tr((BB^*+qB^*B)A^*)|^2}{\|B\|_{F}^2})\\ \notag
                  &\quad-q(1-q)\Tr((AA^*+A^*A)B^*B)\\ \label{eq:putk1k2}
                  &\eqcolon k_{1}+k_{2},
\end{align}
where 
\begin{align*}
k_{1}&=\Tr((B^*B+qBB^*)(AA^*+A^*A)) -\frac{|\Tr((BB^*+qB^*B)A^*)|^2}{\|B\|_{F}^2}, \\
k_{2}&=-q(1-q)\Tr((AA^*+A^*A)B^*B).   
\end{align*}
We need to verify
\begin{align}\label{k1ub}
    k_{1}\le (1+q)\|A\|_{(2),2}^2\|B\|_{F}^2
\end{align}
and 
\begin{align}\label{k2ub}
    k_{2}\le -2q(1-q)\sigma_{n}^2(A)\|B\|_{F}^2
\end{align}
to obtain inequality (\ref{ineq:sqdefog1}). We first show inequality (\ref{k1ub}). Put $J_{1}=(BB^*+qB^*B)/\{(1+q)\|B\|_{F}^2\}$. Then $J_{1}$ is a density matrix, and we have 
\begin{align}\notag
    k_{1}&=(1+q)\|B\|_{F}^2\{\Tr(J_{1}(AA^*+A^*A))-(1+q)|\Tr(J_{1}A^*)|^2\} \\ \label{ineq:k1point1}
    &\le (1+q)\|B\|_{F}^2\{\Tr(J_{1}(AA^*+A^*A))-|\Tr(J_{1}A)|^2\}.
\end{align}
The last inequality holds due to $q\ge0$ and $|\Tr(J_{1}A^*)|=|\Tr(J_{1}A)|$. From (\ref{ineq:k1point1}), it suffices to show
\begin{align}\label{ineq:k1subgoal}
\Tr(J_{1}(AA^*+A^*A))-|\Tr(J_{1}A)|^2\le \|A\|_{(2),2}^2   
\end{align}
to have (\ref{k1ub}). Let us introduce some notions to prove (\ref{ineq:k1subgoal}). The symbol $\CD$ denotes the set of all density matrices in $\MBC{n}$. Notice that $\CD$ is convex, bounded, and closed. For any matrix $X \in \MBC{n}$, $W(X)$ denotes the numerical range of $X$ defined as $\{\la\vu,X\vu\ra|\vu \in \BC^n,\|\vu\|_{F}=1\}.$ We note that $W(X)$ is also convex (\cite{RBMatAna}, the Toeplitz-Hausdorff theorem), bounded, and closed. The convexity of $W(X)$ yields $W(X)=\{\Tr(JX)|J\in\CD\}$. Let us denote by $|X|_{c}$ the Cartesian modulus of $X$ \cite{VarBou} defined as $|X|_{c}=\sqrt{(XX^*+X^*X)/2}$. Here, using the Cartesian modulus, we define the function $f_{A}:\CD\to\BR;\rho\mapsto\Tr(\rho(|A-\Tr(\rho A)I_{n}|_{c}^2+|A|_{c}^2))$. We see that $f_{A}$ is continuous in $\CD$. Thus the extreme value theorem implies $f_{A}$ takes on its maximum value in $\CD$, that is,
\begin{align}\notag
    \Tr(J_{1}(AA^*+A^*A))-|\Tr(J_{1}A)|^2 =f_{A}(J_{1})\le \max_{\rho\in\CD}f_{A}(\rho)
\end{align}
holds. We also define the function $g_{A}$ from $\CD\times\CD$ to $\BR$ as $g_{A}(\rho,\gamma)=\Tr(\rho(|A-\Tr(\gamma A)I_{n}|_{c}^2+|A|_{c}^2))$. Clearly, we see that $g_{A}(\rho,\rho)=f_{A}(\rho)$ for all $\rho \in\CD$. We can confirm that $g_{A}(\rho, \gamma)$ is concave in $\rho$ for all $\gamma$, convex in $\gamma$ for all $\rho$, and continuous in both $\rho$ and $\gamma$. Therefore, $g_{A}(\rho,\gamma)$ satisfies all conditions of Kakutani's minimax theorem \cite{kakutani}. Here, choose $\rho\in\CD$ arbitrarily and fix it. Then $g_{A}(\rho,\gamma)\ge g_{A}(\rho,\rho)$ for all $\gamma\in\CD$ and $g_{A}(\rho,\gamma)$ attains its minimum value at $\gamma=\rho$ because 
\begin{align*}
    g_{A}(\rho,\gamma)-g_{A}(\rho,\rho)&=\Tr(\rho|A-\Tr(\gamma A)I_{n}|_{c}^2)-\Tr(\rho|A-\Tr(\rho A)I_{n}|_{c}^2)\\
    &=|\Tr(\rho A)-\Tr(\gamma A)|^2\ge0.
\end{align*}
Summarizing, we have 
\begin{align}\notag
    \max_{\rho\in\CD}f_{A}(\rho)&=\max_{\rho\in\CD} \min_{\gamma\in\CD}g_{A}(\rho,\gamma) \\ \label{eq:thmpoint2}
    &=\min_{\gamma\in\CD} \max_{\rho\in\CD}g_{A}(\rho,\gamma).
\end{align}
Since $W(X)=\{\Tr(JX)|J\in\CD\}$ for any $X\in M_{n}(\BC)$, we see that $\rho, \gamma\in \CD$ of $g_{A}(\rho,\gamma)$ can be regarded as one dimensional matrices. Therefore, (\ref{eq:thmpoint2}) satisfies
\begin{align*}
\min_{\gamma\in\CD} \max_{\rho\in\CD}g_{A}(\rho,\gamma)&=\min_{\|\vv\|_{F}=1} \max_{\|\vu\|_{F}=1}\la \vu, |A-\la \vv, A\vv\ra I_{n}|_{c}^2+|A|_{c}^2 \vu\ra.
\end{align*}
The right-hand side of the above equality can be calculated as
\begin{align*}
\min_{\|\vv\|_{F}=1}& \max_{\|\vu\|_{F}=1}\la \vu, |A-\la \vv, A\vv\ra I_{n}|_{c}^2+|A|_{c}^2 \vu\ra\\
 &=\min_{\|\vv\|_{F}=1} \max_{\|\vu\|_{F}=1}\la\vu, 2|A|_{c}^2\vu\ra+ |\la\vv, A\vv\ra -\la\vu,A\vu\ra|^2-|\la\vu,A\vu\ra|^2\\
 &=\max_{\|\vu\|_{F}=1}\la\vu, 2|A|_{c}^2\vu\ra-|\la\vu,A\vu\ra|^2\\
 &=\max_{\|\vu\|_{F}=1}\la\vu, AA^*+A^*A\vu\ra-|\la\vu,A\vu\ra|^2.
\end{align*}
The second equality holds because $|\la\vv, A\vv\ra -\la\vu,A\vu\ra|^2$ takes on zero when $\vv=\vu$. Therefore, we have
\begin{align}\notag
    \Tr(J(AA^*+A^*A))-|\Tr(JA)|^2&\le \max_{\rho\in\CD}f_{A}(\rho)\\ \label{eq:thmpoint3}
    &=\max_{\|\vu\|_{F}=1}\la\vu, AA^*+A^*A\vu\ra-|\la\vu,A\vu\ra|^2.
\end{align}
As for (\ref{eq:thmpoint3}), we can show 
\begin{align*}
    \max_{\|\vu\|_{F}=1}\la\vu, AA^*+A^*A\vu\ra-|\la\vu,A\vu\ra|^2\le \|A\|_{(2),2}^2
\end{align*}
through a similar discussion of Theorem 13 in \cite{VarBou}, and we finally obtain inequality (\ref{ineq:k1subgoal}). Let us explain in detail. Let $\vu'$ be a unit vector that attains the maximum value of $\la\vu, AA^*+A^*A\vu\ra-|\la\vu,A\vu\ra|^2$. Choose two orthonormal bases $\{\vu_{i}\}$ and $\{\vv_{i}\}$ of $\BC^n$ so that $\vu_{1}=\vv_{1}=\vu'$ and $\la A\vv_{1},\vu_{i}\ra=0$ for $\forall i>2$. Put $U=[\vu_{1},\dots,\vu_{n}], V=[\vv_{1},\dots,\vv_{n}]$, and $A=UU^*AVV^*\eqcolon U[a'_{i,j}]V^*$, where $a'_{i,j}=\la A\vv_{j},\vu_{i} \ra$. Then the right-hand side of (\ref{eq:thmpoint3}) is bounded above by
\begin{align*}
    \max_{\|\vu\|_{F}=1}\la\vu, AA^*+A^*A\vu\ra-|\la\vu,A\vu\ra|^2&=\la\vu_{1}, AA^* \vu_{1}\ra +\la \vv_{1},A^*A\vu_{1}\ra-|\la\vu_{1},A\vv_{1}\ra|^2\\ 
    &=(\Sigma_{j=1}^{n}\la\vu_{1}, A\vv_{j}\ra\la\vv_{j}, A^* \vu_{1}\ra\\
    &\quad+ \la\vv_{1}, A^* \vu_{j}\ra\la\vu_{j}, A \vv_{1}\ra) -|a'_{1,1}|^2\\
    &=(\Sigma_{j=1}^{n}|a'_{j,1}|^2+|a'_{1,j}|^2) -|a'_{1,1}|^2\\\
    &=|a'_{2,1}|^2+\Sigma_{j=1}^{n}|a'_{1,j}|^2.\\
    &\le \Sigma_{j=1}^n (|a'_{1,j}|^2+|a'_{2,j}|^2)\\
    &=\Tr(U^*AVV^*A^*U\Diag\{1,1,0,\dots,0\})\\
    &=\Tr(AA^*[\vu_{1},\vu_{2}][\vu_{1},\vu_{2}]^*)\\
    &\le \|A\|_{(2),2}^2.
\end{align*}
The last inequality holds because of the orthonormality of $\vu_{1}, \vu_{2}$ and Ky Fan's maximum principle \cite{RBMatAna}. We conclude from the above inequality that (\ref{k1ub}) holds. Let us prove inequality (\ref{k2ub}). Letting $J_{2}=B^*B/\|B\|_{F}^2$ we see that $J_{2}$ is a density matrix and $k_{2}$ can be written as $k_{2}=-q(1-q)\|B\|_{F}^2\Tr(J_{2}(AA^*+A^*A))$. We obtain from $\Tr(J_{2}(AA^*+A^*A))\in W(AA^{*}+A^{*}A)=[\sigma_{n}(AA^*+A^*A),\sigma_{1}(AA^*+A^*A)]$ and Weyl's inequality \cite{RBMatAna} that
\begin{align*}
-q(1-q)\Tr(J_{2}(AA^*+A^*A)) &\le-q(1-q)\sigma_{n}(AA^*+A^*A)\\
&\le-q(1-q)(\sigma_{n}(AA^*)+\sigma_{n}(A^*A))\\
&=-2q(1-q)\sigma_{n}^2(A),
\end{align*}
which gives (\ref{k2ub}). We finally obtain inequality (\ref{ineq:sqdefog1}).\par
Through a similar discussion by replacing $\|A^*B+qBA^*\|_{F}^2$ by $\|AB^*+qB^*A\|_{F}^2$ and $|\Tr((BB^*+qB^*B)A^*)|$ by $|\Tr((A^*A+qAA^*)B^*)|$, we also obtain 
\begin{align}\label{ineq:qdefog1BA}
    \|AB-qBA\|_{F}^{2}&\le \{(1+q)\|B\|_{(2),2}^2-2q(1-q)\sigma_{n}^2(B)\}\|A\|_{F}^2.
\end{align}
When $q\ge1$, inequality (\ref{ineq:sqdefog2}) is derived from (\ref{ineq:qdefog1BA}), for (\ref{ineq:qdefog1BA}) and $\frac{1}{q}\in (0,1]$ yields  
\begin{align*}
    \|AB-qBA\|_{F}^{2}&=q^{2}\|BA-\frac{1}{q}AB\|_{F}^{2}\\ 
                      &\le q^2\{(1+\frac{1}{q})\|A\|_{(2),2}^2-2\frac{1}{q}(1-\frac{1}{q})\sigma_{n}^2(A)\}\|B\|_{F}^2\\
                      &=\{(q^2+q)\|A\|_{(2),2}^{2}-2(q-1)\sigma_{n}^2(A)\}\|B\|_{F}^{2},
\end{align*}
and we finally obtain (\ref{ineq:sqdefog2}). This completes the proof of Theorem \ref{thm:main2}. 
\end{proof}
And from now on, we shall consider the only case $q\in[0,1]$ because it follows from the above discussion of deducing (\ref{ineq:sqdefog2}) form (\ref{ineq:sqdefog1}) that the case $q\in (0,1]$ can be applied to the case $q\in[1,+\infty)$.
We also give an equivalent condition for the equality in (\ref{ineq:sqdefog1}).
\begin{prop}\label{prop:charact2}
Let $A,B\in M_{n}(\BC)\setminus\{O_{n}\}$ and $q\in[0,1]$.
\begin{enumerate}
    \item[\textup{(i)}] If $n\ge 2$, then the equality in $(\ref{ineq:sqdefog1})$ holds if and only if $q,A,$ and $B$ satisfy one of the following conditions.
    \begin{enumerate}
        \item[\textup{(a)}] $q=0$, and $\|AB\|_{F}^{2}=\|A\|_{F}^{2}\|B\|_{F}^2$.
        \item[\textup{(b)}] $q=1$, and $\|AB-BA\|_{F}^{2}=2\|A\|_{(2),2}^{2}\|B\|_{F}^2$.
    \end{enumerate}
    \item[\textup{(ii)}] If $n=1$, then the equality in \textup{(\ref{ineq:sqdefog1})} holds if and only if $q=0$.
\end{enumerate}
\end{prop}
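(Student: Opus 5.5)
The proof splits into the trivial scalar case $n=1$, a direct verification of sufficiency, and a necessity argument that traces when every inequality in the proof of Theorem \ref{thm:main2} becomes an equality.

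\textbf{Case $n=1$.} Write $A=a$ and $B=b$ with $a,b\neq0$. Then $\|A\|_{(2),2}^2=\sigma_1^2(A)=|a|^2$. The equality in (\ref{ineq:sqdefog1}) reads
\[
(1-q)^2|a|^2|b|^2=(1+q-2q+2q^2)|a|^2|b|^2 .
\]
This reduces to $q+q^2=0$, so $q=0$ on $[0,1]$, and conversely $q=0$ gives equality.

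\textbf{Sufficiency for $n\ge2$.} Condition (b) is literally the equality in (\ref{ineq:sqdefog1}) at $q=1$. For condition (a), note the chain $\|AB\|_F\le\sigma_1(A)\|B\|_F\le\|A\|_{(2),2}\|B\|_F\le\|A\|_F\|B\|_F$. Hence $\|AB\|_F^2=\|A\|_F^2\|B\|_F^2$ forces every step to be an equality, so $\|AB\|_F^2=\|A\|_{(2),2}^2\|B\|_F^2$, which is (\ref{ineq:sqdefog1}) at $q=0$.

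\textbf{Necessity for $n\ge2$.} If $q=1$ we are in (b). If $q=0$, equality gives $\|AB\|_F^2=\|A\|_{(2),2}^2\|B\|_F^2$. Since $\|AB\|_F^2\le\sigma_1^2(A)\|B\|_F^2$, this forces $\sigma_2(A)=0$, so $A$ has rank at most one. Then $\|A\|_{(2),2}=\|A\|_F$, which is (a).

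\textbf{The case $q\in(0,1)$.} The real work is to show equality is impossible here. Equality in (\ref{ineq:sqdefog1}) forces equality in (\ref{ineq:CS}), (\ref{k1ub}) and (\ref{k2ub}) simultaneously. I would extract four consequences.
\begin{enumerate}
\item From (\ref{ineq:k1point1}) with $q>0$: $q|\Tr(J_1A)|^2=0$, so $\Tr(J_1A)=0$.
\item Then (\ref{ineq:k1subgoal}) must be tight, giving $\Tr(J_1(AA^*+A^*A))=\|A\|_{(2),2}^2=\sigma_1^2(A)+\sigma_2^2(A)$.
\item From (\ref{k2ub}) with $q(1-q)>0$: $\Tr(J_2(AA^*+A^*A))=2\sigma_n^2(A)$. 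So the range of $B^*B$ lies in the eigenspace of $AA^*+A^*A$ for its smallest eigenvalue, and by the Weyl step every vector $x$ there satisfies $\|Ax\|=\|A^*x\|=\sigma_n(A)\|x\|$.
\item Cauchy--Schwarz equality: $A^*B+qBA^*=\lambda B$ for some $\lambda\in\BC$.
\end{enumerate}

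\textbf{Deriving a contradiction.} Write $J_1=\frac{1}{1+q}\cdot\frac{BB^*}{\|B\|_F^2}+\frac{q}{1+q}J_2$. Then item 2 becomes
\[
\frac{\Tr(BB^*(AA^*+A^*A))}{\|B\|_F^2}=(1+q)(\sigma_1^2+\sigma_2^2)-2q\sigma_n^2 .
\]
Combined with the bound $\Tr(BB^*(AA^*+A^*A))\le 2\sigma_1^2\|B\|_F^2$, this forces $\sigma_2^2\le\sigma_1^2$ with gaps controlled by $q$.

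The plan is to push this together with item 4. Multiplying $A^*B+qBA^*=\lambda B$ on the right by vectors in the range of $B^*$, and using that $A,A^*$ act with norm exactly $\sigma_n$ there, should give an estimate such as $\|BB^*\|$-weighted values of $AA^*$ being at most about $\sigma_n^2$. This would force $\sigma_1=\sigma_2=\sigma_n$, and then $\Tr(J_1A)=0$ with $A$ a multiple of a unitary should contradict the strict inequality $(1-q)^2<1+q-2q+2q^2$, as in the $n=1$ computation.

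\textbf{Main obstacle.} The hardest step is this last one: turning the four equality conditions into $\sigma_1(A)=\sigma_n(A)$ and then into a contradiction. If the argument through item 4 fails, I would instead use item 4 together with a trace pairing against $B$ to compute $\|AB-qBA\|_F^2$ exactly, and compare it with the right-hand side.
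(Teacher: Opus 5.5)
\textbf{There is a genuine gap in the case $q\in(0,1)$.} Your treatment of $n=1$, of sufficiency, and of the endpoint cases $q=0,1$ is fine, and items 1--4 are correctly extracted. The trouble is the step meant to produce the contradiction. It is only sketched (``should give an estimate such as \dots''), and as written it cannot work, because you never determine the scalar $\lambda$ in $A^*B+qBA^*=\lambda B$.

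With $\lambda$ unknown, right-multiplying by vectors in the range of $B^*$ gives at best $\|A^*B\|_F\le(|\lambda|+q\sigma_n(A))\|B\|_F$, and $|\lambda|$ is uncontrolled. The bound $\Tr(BB^*(AA^*+A^*A))\le 2\sigma_1^2(A)\|B\|_F^2$ that you combine with item 2 also yields nothing. It holds with equality when $\sigma_1=\sigma_2=\sigma_n$, so it cannot rule anything out.

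\textbf{The missing idea} is to pair item 4 with $B$ and use item 1:
$$\lambda\|B\|_F^2=\la A^*B+qBA^*,B\ra=\Tr((BB^*+qB^*B)A^*)=(1+q)\|B\|_F^2\,\Tr(J_1A^*)=0.$$
Hence $\lambda=0$, i.e.\ $A^*B=-qBA^*$. Cyclicity of the trace and item 3 then give
$$\Tr(BB^*AA^*)=\|A^*B\|_F^2=q^2\|BA^*\|_F^2=q^2\Tr(B^*BA^*A)=q^2\sigma_n^2(A)\|B\|_F^2.$$
Substitute this into your identity from item 2 and use $\Tr(BB^*A^*A)\le\sigma_1^2(A)\|B\|_F^2$. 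This gives
$$q\sigma_1^2(A)+(1+q)\sigma_2^2(A)\le(2q+q^2)\sigma_n^2(A).$$
The left side is at least $(1+2q)\sigma_n^2(A)$, so $(1-q^2)\sigma_n^2(A)\le 0$. Hence $\sigma_n(A)=0$, and then $q\sigma_1^2(A)\le 0$ gives $\sigma_1(A)=0$, contradicting $A\neq O_n$. This is essentially the paper's argument.

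No detour through ``$A$ is a multiple of a unitary'' is needed. If you did take that route, note that for $n\ge 2$ the right-hand side of (\ref{ineq:sqdefog1}) is then $2(1+q^2)\sigma^2\|B\|_F^2$. The relevant strict inequality is therefore $(1+q)^2<2(1+q^2)$, obtained from the triangle inequality, not the $n=1$ comparison you cite.
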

\begin{proof}
Let $n\ge 2$. Since the sufficiency is obvious, we show the necessity. Suppose that the equality in (\ref{ineq:sqdefog1}) holds. We shall show $q$ equals 0 or 1, for this and the equality in (\ref{ineq:sqdefog1}) will lead to condition \textup{(a)} or \textup{(b)}. As for the condition of $A$ and $B$ in \textup{(a)}, it is clear that $\|AB\|_{F}^2=\|A\|_{(2),2}^2\|B\|_{F}^2$ if and only if $\|AB\|_{F}^2=\|A\|_{F}^2\|B\|_{F}^2$. The equality in (\ref{ineq:sqdefog1}) implies that the equalities in (\ref{ineq:CS}), (\ref{k1ub}), (\ref{k2ub}), and (\ref{ineq:k1point1}) hold. It follows from the equality in (\ref{ineq:k1point1}) that $q|\Tr(J_{1}A)|^2=0$, which implies $q=0$ or $\Tr(J_{1}A^*)=0$. Suppose that $\Tr(J_{1}A^*)=0$ and $q\neq0$. Due to the equality in (\ref{ineq:CS}), there exists a number $\alpha\in\BC$ such that $A^*B+qBA^*=\alpha B$. This equation and $0=\Tr(J_{1}A^*)=\la A^*B+qBA^*,B\ra/\{(1+q)\|B\|_{F}^2\}$ imply $\alpha=0$.
By $A^*B=-qBA^*$ and the equalities in (\ref{k2ub}), (\ref{k1ub}), we see that  
\begin{align*}
    \sigma_{n}^2(A)\|B\|_{F}^2&=\Tr(B^*BAA^*)=\Tr(B^*BA^*A)=\frac{-1}{q}\Tr(B^*A^*BA),\\
    \Tr(BB^*AA^*)&=-q\Tr(BAB^*A^*)=q^2\sigma_{n}^2(A)\|B\|_{F}^2,
\end{align*}
and
\begin{align*}
    \|A\|_{(2),2}^2&=\Tr(J_{1}(AA^*+A^*A))\\
&= \frac{1}{(1+q)\|B\|_{F}^2}\{\Tr(BB^*A^*A)+(2q+q^2)\sigma_{n}^2(A)\|B\|_{F}^2\}\\
&\le \frac{1}{(1+q)\|B\|_{F}^2}\{\sigma_{1}^2(A)\|B\|_{F}^2+(2q+q^2)\sigma_{n}^2(A)\|B\|_{F}^2\}\\
&=\frac{1}{1+q}\sigma_{1}^2(A)+\frac{2q+q^2}{1+q}\sigma_{n}^2(A) \le \|A\|_{(2),2}^2,
\end{align*}
The above inequality implies that $\sigma_{1}^2(A)+(2q+q^2)\sigma_{n}^2(A)=(1+q)(\sigma_{1}^2(A)+\sigma_{2}^2(A))$. If $\sigma_{n}(A)=0$, then we have $0=q\sigma_{1}^2(A)+(1+q)\sigma_{2}^2(A)$, which leads to $q=0$, but this contradicts $q\neq0$. Suppose $\sigma_{n}(A)>0$. Then we see that $0=q(\sigma_{1}^2(A)-\sigma_{n}^2(A))+(1+q)(\sigma_{2}^2(A)-q\sigma_{n}^2(A))$. Since $q\neq 0$, this equation implies that $\sigma_{1}(A)=\sigma_{n}(A)$ and $\sigma_{2}^2(A)=q\sigma_{n}^2(A)$. From these equalities, we have $q=1$. This completes the proof of case \textup{(i)}. We can 
show case \textup{(ii)} by a direct calculation.
\end{proof} 
From the above proposition, inequality (\ref{ineq:sqdefog1}) becomes sharp only at $q=0,1$. Next, we derive another upper bound on $k_{1}$ in (\ref{eq:putk1k2}) to obtain generalizations of (\ref{ineq:GBWineq1}) as follows:
\begin{thm}\label{thm:main1}
Let $n$ be a positive integer, $A,B\in \MBC{n}$, and $q \ge 0$. 
\begin{enumerate}
    \item[\textup{(i)}] If $q \in [0,1]$, then 
    \begin{align}
    \begin{split}
        \|AB-qBA\|_{F}^{2} &\le \{(1-q^2)\sigma_{1}^{2}(A)+\frac{(1+q)^2}{2}\|A\|_{(2),2}^{2}\\&\quad-2q(1-q)\sigma_{n}^2(A)\}\|B\|_{F}^{2}.
    \end{split}\label{ineq:qdefog1}
    \end{align}
    \item[\textup{(ii)}] If $q \in [1,+\infty)$, then 
    \begin{align*}
    \|AB-qBA\|_{F}^{2} &\le \{(q^2-1)\sigma_{1}^{2}(A)+\frac{(1+q)^2}{2}\|A\|_{(2),2}^{2}\\
    &\quad-2(q-1)\sigma_{n}^2(A)\}\|B\|_{F}^{2}.
    \end{align*}
\end{enumerate}
\end{thm}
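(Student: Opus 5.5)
The plan is to reuse the proof of Theorem \ref{thm:main2} unchanged up to the splitting $\|AB-qBA\|_{F}^2\le k_{1}+k_{2}$ in (\ref{eq:putk1k2}), which holds for $q\in[0,1]$, and to keep the bound (\ref{k2ub}) on $k_{2}$ as it is. The only new ingredient is a different upper bound on $k_{1}$. Namely, for $q\in[0,1]$ I will show
\begin{align*}
k_{1}\le \Bigl\{(1-q^2)\sigma_{1}^2(A)+\frac{(1+q)^2}{2}\|A\|_{(2),2}^2\Bigr\}\|B\|_{F}^2 .
\end{align*}
Adding (\ref{k2ub}) to this then gives (\ref{ineq:qdefog1}) directly.

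To bound $k_{1}$, take the density matrix $J_{1}=(BB^*+qB^*B)/\{(1+q)\|B\|_{F}^2\}$ as before and put $t=\Tr(J_{1}A)$. As in the proof of Theorem \ref{thm:main2}, using $|\Tr(J_{1}A^*)|=|t|$,
\begin{align*}
k_{1}=(1+q)\|B\|_{F}^2\bigl\{\Tr(J_{1}(AA^*+A^*A))-(1+q)|t|^2\bigr\}.
\end{align*}
Instead of discarding $q|t|^2$ as in (\ref{ineq:k1point1}), I split the bracket as a convex combination with weights $\frac{1-q}{2}$ and $\frac{1+q}{2}$:
\begin{align*}
\frac{1-q}{2}\Tr(J_{1}(AA^*+A^*A))+\frac{1+q}{2}\bigl\{\Tr(J_{1}(AA^*+A^*A))-2|t|^2\bigr\}.
\end{align*}
The weights sum to $1$, and the coefficient of $|t|^2$ is $\frac{1+q}{2}\cdot 2=1+q$, so this equals the bracket.

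Each piece is then bounded separately. The first piece satisfies $\Tr(J_{1}(AA^*+A^*A))\le\sigma_{1}(AA^*)+\sigma_{1}(A^*A)=2\sigma_{1}^2(A)$, because $J_{1}$ is a density matrix, so $\Tr(J_{1}X)\in W(X)$ for Hermitian $X$. The second piece is at most $\|A\|_{(2),2}^2$ by (\ref{ineq:VarbouImportant}), quoted from \cite{VarBou}. Hence the bracket is at most $(1-q)\sigma_{1}^2(A)+\frac{1+q}{2}\|A\|_{(2),2}^2$. Multiplying by $(1+q)\|B\|_{F}^2$ gives the claimed bound on $k_{1}$, and part (i) follows.

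For part (ii), I first record the analogue of (\ref{ineq:qdefog1}) with the roles of $A$ and $B$ exchanged, exactly as (\ref{ineq:qdefog1BA}) was obtained. This is done by working with $\|AB^*+qB^*A\|_{F}^2$ and the density matrix built from $A^*A+qAA^*$. For $q\ge1$, write $\|AB-qBA\|_{F}^2=q^2\|BA-\frac1q AB\|_{F}^2$ and apply that swapped inequality with parameter $1/q\in(0,1]$. The factor $q^2$ turns $(1-q^{-2})$ into $q^2-1$, $\frac{(1+q^{-1})^2}{2}$ into $\frac{(1+q)^2}{2}$, and $2q^{-1}(1-q^{-1})$ into $2(q-1)$, which is exactly the stated bound.

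I expect no real obstacle. The one step that needs an idea is choosing the split weights $\frac{1-q}{2},\frac{1+q}{2}$ so that (\ref{ineq:VarbouImportant}) absorbs the full $(1+q)|t|^2$ term. It also needs the check that both weights are nonnegative, which is precisely where $q\in[0,1]$ is used.
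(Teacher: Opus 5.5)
Your proposal is correct and matches the paper's proof essentially step for step. You use the same reduction to the bracket in $k_{1}$ with $J_{1}=(BB^*+qB^*B)/\{(1+q)\|B\|_{F}^2\}$, the same convex split with weights $\frac{1-q}{2}$ and $\frac{1+q}{2}$, and the same two bounds, namely (\ref{ineq:VarbouImportant}) and $\Tr(J_{1}(AA^*+A^*A))\le 2\sigma_{1}^2(A)$ via the numerical range. Your explicit derivation of part (ii), via the $A\leftrightarrow B$ swap and $q\mapsto 1/q$, fills in what the paper leaves implicit (it proves only case (i)), and the arithmetic there checks out.
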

\begin{proof}
We only show case \textup{(i)}. Let $A,B\neq O_{n}$. Since $\|AB-qBA\|_{F}^2\le k_{1}+k_{2}$, inequality (\ref{k2ub}), and $k_{1}=(1+q)\|B\|_{F}^2\{\Tr(J_{1}(AA^*+A^*A))-(1+q)|\Tr(J_{1}A^*)|^2\}$, it suffices to show that
\begin{align*}
    \{\Tr(J_{1}(AA^*+A^*A))-(1+q)|\Tr(J_{1}A^*)|^2\} \le (1-q)\sigma_{1}(A)^2+\frac{1+q}{2}\|A\|_{(2),2}^2.
\end{align*}
to get (\ref{ineq:qdefog1}). Inequality (\ref{ineq:VarbouImportant}) leads to
\begin{align*}
    \{\Tr(&J_{1}(AA^*+A^*A))-(1+q)|\Tr(J_{1}A^*)|^2\}\\
    &=\frac{1-q}{2}\Tr(J_{1}(AA^*+A^*A))+\frac{1+q}{2}\{\Tr(J_{1}(AA^*+A^*A))-2|\Tr(J_{1}A)|^2\} \\
    &\le\frac{1-q}{2}\Tr(J_{1}(AA^*+A^*A))+\frac{1+q}{2}\|A\|_{(2),2}^2\\
    &\le(1-q)\sigma_{1}^2(A)+\frac{1+q}{2}\|A\|_{(2),2}^2.
\end{align*}
The last inequality holds because $\Tr(J_{1}(AA^*+A^*A))\in W(AA^*+A^*A)=[\sigma_{n}(AA^*+A^*A), \sigma_{1}(AA^*+A^*A)]$ and Weyl's inequality. This completes the proof of Theorem \ref{thm:main1}.
\end{proof}
\begin{rem}\label{rem:hikaku}
Comparing (\ref{ineq:sqdefog1}) and (\ref{ineq:qdefog1}), we see that the right-hand side of (\ref{ineq:sqdefog1}) is tighter than that of (\ref{ineq:qdefog1}). Indeed, for any $A\in M_{n}(\BC)$ and $q\in[0,1]$, the difference of \begin{align*}h_{1}(q,A)\coloneq(1-q^2)\sigma_{1}^{2}(A)+\frac{(1+q)^2}{2}\|A\|_{(2),2}^{2}-2q(1-q)\sigma_{n}^2(A)
\end{align*}and 
\begin{align*}h_{2}(q,A)\coloneq(1+q)\|A\|_{(2),2}^{2}-2q(1-q)\sigma_{n}^2(A)\end{align*}
yields 
\begin{align*}
    h_{1}(q,A)-h_{2}(q,A)=\frac{1-q^2}{2}(\sigma_{1}^2(A)-\sigma_{2}^2(A))\ge0.
\end{align*}
\end{rem}
From Remark \ref{rem:hikaku} and Proposition \ref{prop:charact2}, we immediately obtain the following equality conditions of the inequalities in Theorem \ref{thm:main1}.
\begin{prop}\label{prop:charact1} Let $A,B\in \MBC{n}\setminus \{O_{n}\}$ and $q\in [0,1]$.
\begin{enumerate}
    \item[\textup{(i)}] If $n\ge 2$, then the equality in \textup{(\ref{ineq:qdefog1})} holds if and only if $q$, $A,$ and $B$ satisfy $q=1$, and $\|AB-BA\|_{F}^{2}=2\|A\|_{(2),2}^2\|B\|_{F}^2$.
    \item[\textup{(ii)}] If $n=1$, then the equality in \textup{(\ref{ineq:qdefog1})} never holds for any $q\in [0,1]$. 
\end{enumerate}
\end{prop}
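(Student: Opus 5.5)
The idea is to sandwich the right-hand side of (\ref{ineq:qdefog1}) between $\|AB-qBA\|_{F}^2$ and the right-hand side of (\ref{ineq:sqdefog1}). Then Proposition \ref{prop:charact2} and Remark \ref{rem:hikaku} together characterize equality. Write $h_{1}(q,A)$ and $h_{2}(q,A)$ as in Remark \ref{rem:hikaku}. Theorem \ref{thm:main2} and the remark give
\begin{align*}
\|AB-qBA\|_{F}^2\le h_{2}(q,A)\|B\|_{F}^2\le h_{1}(q,A)\|B\|_{F}^2 .
\end{align*}
So equality in (\ref{ineq:qdefog1}) holds if and only if both of the following hold:
\begin{align*}
\|AB-qBA\|_{F}^2=h_{2}(q,A)\|B\|_{F}^2 \quad\text{and}\quad h_{1}(q,A)=h_{2}(q,A),
\end{align*}
where we use $\|B\|_{F}\neq0$. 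The first condition is exactly equality in (\ref{ineq:sqdefog1}). The second is $\frac{1-q^2}{2}(\sigma_{1}^2(A)-\sigma_{2}^2(A))=0$, i.e. $q=1$ or $\sigma_{1}(A)=\sigma_{2}(A)$ (this last reading needs $n\ge2$).

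For (i), with $n\ge2$, sufficiency is immediate. If $q=1$, then $h_{1}(1,A)=2\|A\|_{(2),2}^2$, and the stated condition is precisely equality in (\ref{ineq:qdefog1}). For necessity, Proposition \ref{prop:charact2}(i) says equality in (\ref{ineq:sqdefog1}) forces either (b), which is what we want, or (a): $q=0$ and $\|AB\|_{F}^2=\|A\|_{F}^2\|B\|_{F}^2$. So the main step is to rule out case (a) when $q=0$ and $\sigma_{1}(A)=\sigma_{2}(A)$.

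To rule out (a), use $\|AB\|_{F}^2\le\sigma_{1}^2(A)\|B\|_{F}^2$ together with $\|A\|_{F}^2\ge\sigma_{1}^2(A)+\sigma_{2}^2(A)=2\sigma_{1}^2(A)$. Then $\|AB\|_{F}^2=\|A\|_{F}^2\|B\|_{F}^2$ would give $2\sigma_{1}^2(A)\le\sigma_{1}^2(A)$, hence $\sigma_{1}(A)=0$ and $A=O_{n}$, a contradiction. (Equivalently, with $q=0$ the left-hand side of (\ref{ineq:qdefog1}) is at most $\sigma_{1}^2(A)\|B\|_{F}^2$, while the right-hand side is $\sigma_{1}^2(A)\|B\|_{F}^2+\frac12\|A\|_{(2),2}^2\|B\|_{F}^2$, which is strictly larger since $A\neq O_{n}$.) This direct estimate is the cleanest route and avoids any delicacy with the condition $\sigma_{1}=\sigma_{2}$.

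For (ii), with $n=1$, write $A=a$ and $B=b$ as nonzero scalars. Then $\sigma_{1}=\sigma_{n}=|a|$ and $\|A\|_{(2),2}^2=|a|^2$. The left-hand side is $(1-q)^2|a|^2|b|^2$. The right-hand side is
\begin{align*}
\Bigl\{(1-q^2)+\frac{(1+q)^2}{2}-2q(1-q)\Bigr\}|a|^2|b|^2 .
\end{align*}
A direct computation gives the difference of coefficients as $\frac{(1+q)^2}{2}>0$, so equality never holds.
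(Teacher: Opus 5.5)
Your proposal is correct and is essentially the paper's argument: the paper obtains this proposition ``immediately'' from the chain $\|AB-qBA\|_{F}^2\le h_{2}(q,A)\|B\|_{F}^2\le h_{1}(q,A)\|B\|_{F}^2$ of Remark~\ref{rem:hikaku} combined with Proposition~\ref{prop:charact2}, exactly as you do. You make explicit the steps the paper leaves unstated: you exclude case (a) at $q=0$, where your estimate $\|AB\|_{F}^2\le\sigma_{1}^2(A)\|B\|_{F}^2<h_{1}(0,A)\|B\|_{F}^2$ does this cleanly, and you check the scalar case $n=1$ directly.
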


\section{Some investigations on the sharp upper bound} \label{sec:additional}
Let $q\in[0,1]$. From (\ref{ineq:sqdefog1}) and the inequality $-2q(1-q)\sigma_{n}^2(A)\le 0$, we see that 
\begin{align*}
    \|AB-qBA\|_{F}^2\le (1+q)\|A\|_{(2),2}^2\|B\|_{F}^2.
\end{align*}
By Proposition \ref{prop:charact2}, we can easily confirm that this inequality is sharp only at $q=0,1$, the limit cases. Then let us explore the existence of a function $c_{q}\in [0,+\infty)$ on $q\in [0,1]$ satisfying all the following conditions:
\begin{itemize}
    \item[\textup{(I)}] \label{ineq:idealcq}$\|AB-qBA\|_{F}^{2}\le c_{q}\|A\|_{(2),2}^2\|B\|_{F}^2$ holds for $\forall A,B\in M_{n}(\BC)$.
    \item[\textup{(I\hspace{-1.2pt}I)}] The upper bound $c_{q}\|A\|_{(2),2}^2\|B\|_{F}^2$ becomes sharp at every $q\in[0,1]$.
\end{itemize}
From the results of the numerics in \cite{qdefo1}, we see that 
\begin{align*}
    c_q=\frac{1-h_{q}k_{q}}{1-k_{q}}(1+q^2),
\end{align*}
where 
\begin{align*}
    k_{q}\coloneq\frac{q(1-q)^2}{2(1+q^4)},\  h_{q}\coloneq\frac{(1+q)^2}{2(1+q^2)}
\end{align*}
will satisfy \textup{(I)} by replacing $\|A\|_{(2),2}$ by $\|A\|_{F}$, but will not satisfy \textup{(I\hspace{-1.2pt}I)} by replacing $\|A\|_{(2),2}$ by $\|A\|_{F}$. The sharp $c_{q}$ has not yet been obtained. As an option of $c_{q}$, $1+q^l$ with $l\in(1,2)$ is possible because it gives the sharp upper bound when $q=0,1$ and satisfies $q^2\le q^l\le q$ for all $l\in(1,2)$, $q\in[0,1]$. However, as stated in the following remark, $c_{q}=1+q^l$ does not satisfy condition  \textup{(I)}. 
\begin{rem}\label{rem:invalidcoeff}
For any $l \in (1,2)$, we can confirm the existence of $q\in (0,1)$ and $A,B\in \MBC{2}\setminus \{O_{2}\}$ such that $\|AB-qBA\|_{F}^2>(1+q^l)\|A\|_{F}^2\|B\|_{F}^2$ as follows: since $1<l$, there exists a positive integer $k$ satisfying $1+10^{-k}<l$. Taking \begin{align*}
q=10^{-10^k}, A=\begin{bmatrix}0&-1\\0&-1\end{bmatrix}, B=\begin{bmatrix}0&0\\-2&1\end{bmatrix},
\end{align*} we get $\|AB-qBA\|_{F}^{2}=10+2q+q^2$ and $(1+q^l)\|A\|_{F}^{2}\|B\|_{F}^2=10+10q^l$. Here, since
\begin{align*}
2q+q^2&>q=10^{-10^k}=10(10^{-10^k-1})>10q^l,
\end{align*}
we obtain $\|AB-qBA\|_{F}^2>(1+q^l)\|A\|_{F}^2\|B\|_{F}^2$.
\end{rem}
In regard to Remark \ref{rem:invalidcoeff}, due to the way of choosing $q$, the matrices $A,B$ and the number $q$ in this remark satisfy the same inequality if we extend the range of $l$ from $(1,2)$ to $(1,+\infty)$. Therefore, we see that for any $d_{q}\in \{\Sigma_{i=1}^{m}a_{i}q^{b_{i}}|m\in \BN, b_{i}>1, a_{i}\ge0 \ (1\le \forall i \le m), \Sigma_{j=1}^{m}a_{j}=1\}\eqcolon \mathcal{P}_{q}$, there exist $q\in[0,1]$ and $A,B \in M_{n}(\BC)\setminus \{O_{n}\}$ with $n\ge 2$ such that 
\begin{align*}
    \|AB-qBA\|_{F}^{2}> (1+d_{q})\|A\|_{(2),2}^2\|B\|_{F}^2.
\end{align*}
Next, let us see the case where $c_{q}=1+aq+(1-a)d_{q}$ for some $a \in (0,1]$ and $d_{q}\in \mathcal{P}_{q}$. For example, if the dimension of matrices is 2 and $d_{q}=q^2$, then the following inequality holds:
\begin{prop}\label{prop:tightest2dim}
Let $a=\sqrt{2}-1$, $q\in[0,1]$, and $A,B\in M_{2}(\BC)$. Then we have
\begin{align}\label{ineq:propnotsharp}
    \|AB-qBA\|_{F}^{2}\le \{1+aq+(1-a)q^2\}\|A\|_{F}^{2}\|B\|_{F}^{2}.
\end{align}
Under the assumption $A,B\neq O_{2}$, the equality holds if and only if $q,A,B$ satisfy condition 
\textup{(a)} or \textup{(b)} in Proposition \textup{\ref{prop:charact2}}.
\end{prop}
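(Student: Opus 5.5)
The plan is to reduce the inequality to an eigenvalue problem for a fixed $A$ and then split into two regimes according to the ratio $\sigma_{2}^2(A)/\|A\|_{F}^2$.

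\textbf{Reduction.} Fix $A\ne O_{2}$ and regard $B\mapsto AB-qBA$ as a linear operator $T_{A,q}$ on $M_{2}(\BC)\cong\BC^4$ with the Frobenius inner product. In vectorized form $T_{A,q}=I_{2}\otimes A-q\,A^{T}\otimes I_{2}$. Then
\begin{align*}
\max_{\|B\|_{F}=1}\|AB-qBA\|_{F}^2=\lambda_{\max}(T_{A,q}^{*}T_{A,q}),
\end{align*}
so (\ref{ineq:propnotsharp}) is equivalent to
\begin{align*}
\lambda_{\max}(T_{A,q}^{*}T_{A,q})\le c_{q}\|A\|_{F}^2,\qquad c_{q}\coloneq 1+aq+(1-a)q^2 .
\end{align*}
Both sides are invariant under $A\mapsto UAU^{*}$ with $U$ unitary, since $B$ can be conjugated accordingly. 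They are also invariant under $A\mapsto e^{i\theta}A$, and both sides are homogeneous in $A$. By Schur's theorem I may therefore assume
\begin{align*}
A=\begin{bmatrix}\lambda&\mu\\0&\nu\end{bmatrix},\qquad \mu\ge 0,\qquad |\lambda|^2+|\nu|^2+\mu^2=1 .
\end{align*}

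\textbf{Regime 1: $\sigma_{2}^2(A)$ not small.} Here I would use Theorem \ref{thm:main2}(i). For $n=2$ we have $\|A\|_{(2),2}=\|A\|_{F}$, so with $t\coloneq\sigma_{2}^2(A)/\|A\|_{F}^2\in[0,\tfrac12]$ that theorem gives the bound $(1+q)-2q(1-q)t$. This is at most $c_{q}$ exactly when
\begin{align*}
(1-a)q(1-q)\le 2q(1-q)t,\qquad\text{i.e.}\qquad t\ge \frac{1-a}{2}=\frac{2-\sqrt2}{2}.
\end{align*}
So only the case $t<(2-\sqrt2)/2$ remains. Simple alternatives such as the triangle inequality bound $(1+q)^2\sigma_{1}^2(A)$ are too weak near $q=1$, so that case needs a finer argument.

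\textbf{Regime 2: $t<(2-\sqrt2)/2$.} Here I will compute $T_{A,q}^{*}T_{A,q}$ explicitly in the Schur form.
\begin{itemize}
\item The $4\times4$ matrix splits naturally. The off-diagonal entry $B_{21}$ is mapped by $T_{A,q}$ into $\mathrm{span}\{E_{11},E_{21},E_{22}\}$, and $E_{12}$ interacts with the rest only through $\mu$. The characteristic polynomial should therefore be manageable: quartic, possibly factoring into quadratics.
\item I would bound $\lambda_{\max}$ by a Gershgorin-type or $2\times2$ block estimate, in the form $\lambda_{\max}\le \max\{\text{block norms}\}+\text{coupling}$. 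The aim is to show that $c_{q}\|A\|_{F}^2I-T_{A,q}^{*}T_{A,q}$ is positive semidefinite, for instance through its leading principal minors.
\item The worst configuration should be $\lambda=-\nu$ real (after rotation) together with a nonzero $\mu$. This mirrors the extremal matrices of the B\"{o}ttcher--Wenzel inequality and the example in Remark \ref{rem:invalidcoeff}.
\item After restricting to that family, the claim should become a one-variable polynomial inequality in $(q,s)$, where $s$ parametrizes the mixing between $\mu^2$ and $|\lambda|^2+|\nu|^2$.
\item The constant $a=\sqrt2-1$ should be exactly the largest value for which the discriminant of that polynomial stays nonpositive on $[0,1]$. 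Verifying this is where $a=\sqrt2-1$ enters, and it is the main obstacle. First I would try to write the difference $c_{q}\|A\|_{F}^2-\lambda_{\max}$ as a sum of squares times $q(1-q)$.
\end{itemize}

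\textbf{Equality case.} Once the inequality holds, suppose equality occurs with $q\in(0,1)$. In Regime 1 this forces equality in (\ref{ineq:sqdefog1}), which Proposition \ref{prop:charact2} rules out for $q\in(0,1)$. In Regime 2 I expect the sum-of-squares representation to carry a strictly positive factor $q(1-q)$ times a positive quantity, which excludes equality there as well. This leaves $q\in\{0,1\}$. At those values $c_{0}=1$ and $c_{1}=2$, so the inequality reduces to the conditions (a) and (b) of Proposition \ref{prop:charact2}, recalling again that $\|A\|_{(2),2}=\|A\|_{F}$ when $n=2$.
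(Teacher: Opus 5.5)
\textbf{There is a genuine gap: your Regime 2 is not proved, and it carries the whole content of the proposition.} Your reduction and your Regime 1 are correct. For $n=2$, Theorem~\ref{thm:main2}(i) gives the constant $(1+q)-2q(1-q)t$. This is at most $c_q$ exactly when $t\ge\frac{2-\sqrt2}{2}$, and in that range Proposition~\ref{prop:charact2} does exclude equality for $q\in(0,1)$. Regime 2, however, contains every rank-one $A$, for which Theorem~\ref{thm:main2} gives only $1+q>c_q$. For this case you give no argument. You list things that ``should'' happen: a factorization of the quartic, a Gershgorin or leading-minor estimate, a sum-of-squares certificate. None of them is carried out. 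So for $t<\frac{2-\sqrt2}{2}$ both the inequality and the equality characterization remain unproven.

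The heuristics you propose for that computation are also wrong.

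\textbf{The worst case is not $\lambda=-\nu$ for small $q$.} For traceless $A$, the maximizers at $q=0$ are nilpotent rank-one matrices $A=\vu\vv^*$ with $\la\vu,\vv\ra=0$. On these the first-order term $-2q\RE\la AB,BA\ra$ vanishes, so traceless $A$ reach only $1+o(q)$. By contrast, take $A=\begin{bmatrix}1/2&\sqrt3/2\\0&0\end{bmatrix}$ (so $\lambda=\tfrac12$, $\nu=0$, rank one) and $B=\frac14\begin{bmatrix}1&-\sqrt3\\ \sqrt3&-3\end{bmatrix}$, both of unit Frobenius norm. Then $\|AB-qBA\|_F^2=1+\frac q4+\frac{q^2}4$.

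\textbf{You should not expect $\sqrt2-1$ to be where a discriminant of the exact eigenvalue problem vanishes.} The bound is never attained for $q\in(0,1)$. Near $q=0$, the example above forces only $a\ge\frac14$.

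\textbf{Where $\sqrt2-1$ actually comes from, and what your Regime 2 is missing.} In the paper the constant comes from lossy estimates, and these handle all $A$ uniformly:
\begin{enumerate}
\item Take the SVD $A=UD_AV$ and write $\|AB-qBA\|_F^2=\sum_{i,j}|\sigma_i(A)x_{i,j}-q\sigma_j(A)y_{i,j}|^2$, where $X=VBV^*$ and $Y=U^*BU$ are unitarily similar.
\item Bound the off-diagonal terms by Cauchy--Schwarz.
\item Eliminate $x_{1,1}$ (resp.\ $x_{2,2}$) using $\Tr X=\Tr Y$, then apply AM--GM.
\item This leaves the scalar inequality (\ref{ineq:propimppt}). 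It holds because $\{(a+1)(1-q)+1\}\{a+(1-a)q\}\ge1$ on $[0,1]$.
\end{enumerate}
That expression is a concave quadratic in $q$ equal to $1$ at $q=1$. So it stays $\ge1$ on $[0,1]$ exactly when its value at $q=0$ satisfies $a(a+2)\ge1$, i.e.\ $a\ge\sqrt2-1$. This argument needs no case split and no spectral computation, and it is the ingredient your Regime 2 lacks.
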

\begin{proof}
Let $A=UD_{A}V$ be the singular value decomposition of $A$ with the diagonal matrix $D_{A}=\Diag\{\sigma_{1}(A),\sigma_{2}(A)\}$. Since the unitary invariance of the Frobenius norm, we have 
\begin{align} \notag
    \|AB-qBA\|_{F}^{2}&=\|D_{A}VBV^*-qU^*BUD_{A}\|_{F}^2\\ \notag
                      &\eqcolon\|D_{A}X-qYD_{A}\|_{F}^2\\ \label{ineq:sec3propbrbr}
                      &=\Sigma_{i,j}|\sigma_{i}(A)x_{i,j}-q\sigma_{j}(A)y_{i,j}|^2
\end{align}
where $VBV^*=X=[x_{i,j}]_{1\le i,j\le 2}$ and $U^*BU=Y=[y_{i,j}]_{1\le i,j\le 2}$. Note that $X$ and $Y$ are unitarily similar. As for (\ref{ineq:sec3propbrbr}), it follows that  \begin{align*}
&\Sigma_{i,j}|\sigma_{i}(A)x_{i,j}-q\sigma_{j}(A)y_{i,j}|^2\\
&\quad=\Sigma_{i}\sigma_{i}^2(A)|x_{i,i}-qy_{i,i}|^2+|\sigma_{1}(A)x_{1,2}-q\sigma_{2}(A)y_{1,2}|^2+|\sigma_{2}(A)x_{2,1}-q\sigma_{1}(A)y_{2,1}|^2\\
&\quad\le\Sigma_{i}\sigma_{i}^2(A)|x_{i,i}-qy_{i,i}|^2+(\sigma_{1}^2(A)+\sigma_{2}^2(A))\{|x_{1,2}|^2+|x_{2,1}|^2+q^2(|y_{1,2}|^2+|y_{2,1}|^2)\}.
\end{align*}
Let us analyze $|x_{i,i}-qy_{i,i}|^2$ with $i=1$. The case $i=2$ follows analogously. Due to the equality $x_{1,1}+x_{2,2}=\Tr(X)=\Tr(Y)=y_{1,1}+y_{2,2}$, we have 
\begin{align}\notag
    |x_{1,1}-qy_{1,1}|^2&=|x_{1,1}|^2+q^2|y_{1,1}|^2-2q\RE(x_{1,1}\overline{y_{1,1}})\\ \notag
    &=|x_{1,1}|^2+q^2|y_{1,1}|^2-2q\RE((y_{1,1}+y_{2,2}-x_{2,2})\overline{y_{1,1}})\\ \notag
    &=|x_{1,1}|^2+(q^2-2q)|y_{1,1}|^2+2q\RE(x_{2,2}\overline{y_{1,1}}-y_{2,2}\overline{y_{1,1}})\\ \label{ineq:amgm1}
    &\le |x_{1,1}|^2+|x_{2,2}|^2+2(q^2-q)|y_{1,1}|^2-2q\RE(y_{2,2}\overline{y_{1,1}}).
\end{align}
We shall show that
\begin{align}\label{ineq:propimppt}
    \{aq+(1-a)q^2\}(|y_{1,1}|^2+|y_{2,2}|^2)\ge 2(q^2-q)|y_{1,1}|^2-2q\RE(y_{2,2}\overline{y_{1,1}}).
\end{align}
Inequalities (\ref{ineq:propimppt}) with $i=1,2$ will yield the desired inequality, because these lead to
\begin{align}
\begin{split}\label{ineq:sec3proptotyu}
    \|AB-qBA\|_{F}^{2}&\le (\sigma_{1}^2(A)+\sigma_{2}^2(A))\{\|X\|_{F}^2 + q^2\|\begin{bmatrix}0&y_{1,2}\\ y_{2,1}&0\end{bmatrix}\|_{F}^2\\&\quad+\{aq+(1-a)q^2\}\|\begin{bmatrix}y_{1,1}&0\\ 0&y_{2,2}\end{bmatrix}\|_{F}^2\} 
\end{split}\\ \notag
    &\le \{1+aq+(1-a)q^2\}\|A\|_{F}^{2}\|B\|_{F}^{2}.
\end{align}
Inequality (\ref{ineq:propimppt}) is equivalent to 
\begin{align*}
    q\cdot h(q,y_{1,1},y_{2,2})\coloneq q\bigl(\{(a+1)(1-q)+1\}|y_{1,1}|^2+\{a+(1-a)q\}|y_{2,2}|^2+2\RE(y_{2,2}\overline{y_{1,1}})\bigl)\ge 0.
\end{align*}
If $\RE(y_{2,2}\overline{y_{1,1}})\ge 0$, then clearly $qh(q,y_{1,1},y_{2,2})\ge0$ holds. Suppose $\RE(y_{2,2}\overline{y_{1,1}})<0$.
Notice that  $h(q,y_{1,1},y_{2,2})$ is expressed as 
\begin{align}\notag
    h(q,y_{1,1},y_{2,2})&=|\sqrt{\{(a+1)(1-q)+1\}}y_{1,1}+\sqrt{\{a+(1-a)q\}}y_{2,2}|^2\\ \notag
    &\quad-2\bigl(\sqrt{\{(a+1)(1-q)+1\}\{a+(1-a)q\}}-1 \bigl)\RE(y_{2,2}\overline{y_{1,1}}).
\end{align}
With regard to the factor multiplied by $-2\RE(y_{2,2}\overline{y_{1,1}})$ in the above equality,
a direct calculation and the substitution of $\sqrt{2}-1$ for $a$ imply
\begin{align}\notag
\{(a+1)(1-q)+1\}\{a+(1-a)q\}&=-(1-a^2)(q+\frac{a^2+a-1}{1-a^2})^2 +\frac{1}{1-a^2}\\ \notag
&=-2(\sqrt{2}-1)(q-\frac{1}{2})^2+\frac{\sqrt{2}+1}{2}\ge1
\end{align}
for $\forall q\in[0,1]$. This gives $-2\bigl(\sqrt{\{(a+1)(1-q)+1\}\{a+(1-a)q\}}-1 \bigl)\RE(y_{2,2}\overline{y_{1,1}})\ge 0$, and we finally obtain $qh(q,y_{1,1},y_{2,2})\ge0$. Let us derive the equality condition. We only show the sufficiency. Suppose that the equality in (\ref{ineq:propnotsharp}) holds. Following Proposition \ref{prop:charact2}, we shall show $q$ equals $0$ or $1$. Since the equalities in (\ref{ineq:amgm1}) with $i=1,2$, we see that $x_{1,1}=qy_{2,2}$ and $x_{2,2}=qy_{1,1}$, which gives $\Tr(X)=q\Tr(Y)$. If $\Tr(X)\neq0$, then it follows from $\Tr(Y)=\Tr(X)=q\Tr(Y)$ that $q=1$. Suppose $\Tr(X)=0$. Then we have $y_{1,1}=-y_{2,2}$ and $\RE(y_{1,1}\overline{y_{2,2}})=-|y_{1,1}|^2\le 0$. If $\RE(y_{1,1}\overline{y_{2,2}})=0$, then we have $y_{1,1}=0$, and this implies $x_{1,1},x_{2,2},y_{2,2}=0$. Thus the left-hand side of (\ref{ineq:sec3proptotyu}) is bounded above by $(1+q^2)\|A\|_{F}^2\|B\|_{F}^2$. Therefore, we have 
\begin{align*}
    \{1+aq+(1-a)q^2\}\|A\|_{F}^2\|B\|_{F}^2&=\|AB-qBA\|_{F}^2\le(1+q^2)\|A\|_{F}^2\|B\|_{F}^2 \\
   &\le \{1+aq+(1-a)q^2\}\|A\|_{F}^2\|B\|_{F}^2.
\end{align*}
By virtue of the above inequality and $A,B\neq O_{n}$, we get $1+q^2=1+aq+(1-a)q^2$, which leads to $aq(1-q)=0$. This equalities shows $q$ is $0$ or $1$. Assume that $\RE(y_{1,1}\overline{y_{2,2}})<0$, then $h(q,y_{1,1},y_{2,2})$ must be zero. From this we see that $\sqrt{\{(a+1)(1-q)+1\}\{a+(1-a)q\}}=1$, which implies $q$ is equal to $0$ or $1$. This completes the proof.
\end{proof}
Note that $\{(1-h_{q}k_{q})(1+q^2)\}/(1-k_{q})\le 1+(\sqrt{2}-1)q+(2-\sqrt{2})q^2 \le 1+q$ holds for all $q\in[0,1]$, and inequality (\ref{ineq:propnotsharp}) becomes sharp only at $q=0,1$.
\section*{Acknowledgment} 
The author expresses his gratitude to Professor Masaya MATSUURA for his valuable comments which made the construction of this paper better.
\bibliographystyle{elsarticle-num}
\bibliography{reference}
\end{document}